\titlespacing*{\section}{0pt}{3.5ex plus 1ex minus .2ex}{0.5ex plus .2ex}
\let\cite\citet
\newtheorem{theorem}{\small\bf Theorem}
\theoremstyle{remark}
\theoremstyle{definition}
\newtheorem{definition}[theorem]{\small\bf Definition}
\newcommand{\ZZ}{\mathbb{Z}}
\newcommand{\EE}{\mathbb{E}}
\newcommand{\RR}{\mathbb{R}}
\newcommand{\COV}{\operatorname{Cov}}
\newcommand{\vect}{\operatorname{vec}}
\newcommand{\AR}[1]{\operatorname{AR}#1}
\newcommand{\VVAR}[1]{\operatorname{VAR}#1}
\newcommand{\MAR}[1]{\operatorname{MAR}#1}
 \DeclareMathOperator{\diag}{diag}
\date{}      
\newcommand{\keywords}[1]{%
  \bigskip\noindent\textbf{Keywords:} #1
}
\begin{document}

\onehalfspacing

\title{Estimation methods of Matrix‐valued AR model}
\author{%
  Kamil Kołodziejski, \\
  Email: kamil.kolodziejski@dokt.p.lodz.pl \\
  Institute of Mathematics, Łódź University of Technology
}
\maketitle

\begin{abstract}
  This article proposes novel estimation methods for the Matrix Autoregressive (MAR) model, specifically adaptations of the Yule-Walker equations and Burg’s method, addressing limitations in existing techniques.  The MAR model, by maintaining a matrix structure and requiring significantly fewer parameters than vector autoregressive (VAR) models, offers a parsimonious, yet effective, alternative for high-dimensional time series. Empirical results demonstrate that MAR models estimated via the proposed methods achieve a comparable fit to VAR models across metrics such as MAE and RMSE. These findings underscore the utility of Yule-Walker and Burg-type estimators in constructing efficient and interpretable models for complex temporal data.
\end{abstract}

\keywords{Matrix-valued time series; Multivariate time series; Estimation; Autoregressive}

\section{Introduction}
In modern time series analysis, increasingly complex and higher-dimensional datasets are often encountered. To address the challenges associated with modeling multivariate time series, the matrix-valued autoregressive (MAR) model has been extensively studied, as seen in \cite{matrix_ar} and \cite{multi_tensor}. This modeling framework has become an active area of research. Least squares estimators, maximum likelihood estimators, and both simulation and real-data studies have been conducted by \cite{samadi2025matrix}. Furthermore, new classes of models have been introduced, such as the matrix autoregressive moving average (MARMA) model proposed by \cite{tsay2024matrix} and the additive autoregressive model (Add-ARM) introduced by \cite{zhang2024additive}.

In this article, we aim to advance this field by proposing alternative estimation methods, such as Yule-Walker equations and Burg's method, for the MAR model. The methods described by \cite{matrix_ar}, including maximum likelihood estimation (MLE) and least squares estimator (LSE), may, in certain cases, fail to preserve the causality of the model. This behavior is particularly undesirable in applications such as financial modeling, where it is critical to assume that future information does not influence present values.
Moreover, in some modeling tasks, it is desirable to preserve the covariance structure of the time series. In such cases, method-of-moments approaches—such as the Yule-Walker and Burg estimators—may be beneficial.

A matrix-valued autoregressive model, denoted as $\MAR(1)$, for the time series $\{\mathbf{X}_t; t \in \ZZ \}$ is defined by the equation:  
$$
\mathbf{X}_t = \mathbf{A} \mathbf{X}_{t-1} \mathbf{B}^T + \mathbf{Z}_t,
$$
where $\mathbf{X}_t: \Omega \rightarrow \RR^{m \times n}$, $\mathbf{A} \in \RR^{m \times m}$, and $\mathbf{B} \in \RR^{n \times n}$ are coefficient matrices, and $\mathbf{Z}_t: \Omega \rightarrow \RR^{m \times n}$ represents matrix-valued white noise. To obtain a uniquely defined MAR model, we normalize the first coefficient by setting $||\mathbf{A}||_F = 1$. This is justified by the fact that, for any scalar $c \neq 0$, we have $(c\mathbf{A})\mathbf{X}_{t-1}(\frac{1}{c}\mathbf{B}^T) = \mathbf{A} \mathbf{X}_{t-1} \mathbf{B}^T$ which shows that the model is invariant under this rescaling of $\mathbf{A}$ and $\mathbf{B}$. We assume that the matrix-valued white noise $\mathbf{Z}_t$ has zero mean and covariance 
$\boldsymbol{\Sigma} = \COV(\vect(\mathbf{Z}_t), \vect(\mathbf{Z}_t)) = \EE[\vect(\mathbf{Z}_t) \vect(\mathbf{Z}_t)^T]$ (see Definition~\ref{def:white_noise}). 

The $\MAR(1)$ model can be associated with the vector autoregressive model $\VVAR(1)$, which is defined by the equation:  
$$
\vect(\mathbf{X}_t) = (\mathbf{B} \otimes \mathbf{A}) \vect(\mathbf{X}_{t-1}) + \vect(\mathbf{Z}_t),
$$  
where $\vect(\cdot)$ denotes the vectorization operation, and $\otimes$ represents the Kronecker product (see Theorem~\ref{twr:three_matrix_multiplication}). This vectorized representation is particularly useful for establishing certain properties of the MAR model. Moreover, the stationarity of the matrix-valued model is interpreted as the stationarity of the corresponding vectorized time series. Thus, the $\MAR(1)$ model is stationary and causal if 
$$\rho(\mathbf{B} \otimes \mathbf{A}) = \rho(\mathbf{A})\cdot \rho(\mathbf{B}) < 1,$$ 
where $\rho$ denotes the spectral radius of a matrix (Definition~\ref{def:sectral_radious}), see~\cite{matrix_ar}.

The MAR model offers two significant advantages: it preserves the original matrix structure—allowing the two coefficient matrices to retain their respective interpretations—and it substantially reduces the number of parameters in the model. Specifically, a $\MAR(1)$ model requires only $m^2 + n^2$ parameters, whereas a standard $\VVAR(1)$ model requires $m^2n^2$ parameters. As demonstrated in Section~\ref{sec:numerical_results}, the $\MAR(1)$ model can provide a similarly good fit compared to the more complex $\VVAR(1)$ model.

Three estimation methods were introduced by \cite{matrix_ar}: the projection method, iterated least squares (LSE), and maximum likelihood estimation (MLE). In this article, we aim to develop new methods, as none of the previously proposed approaches guarantee a causal MAR model or preserve the autocovariance structure of the time series. Notably, no method-of-moments approach has been introduced yet. Therefore, we propose Yule-Walker and Burg-type estimation methods. 

\section{Yule-Walker estimation}
\label{sec:YW}

A reliable method for estimating stationary time series is through the use of Yule-Walker equations. These equations are particularly well-suited for cases where the time series is known to be stationary, as they guarantee a causal model. The Yule-Walker estimation method is computationally efficient when analyzing long time-series datasets. Additionally, it does not impose strict distributional assumptions on the data, allowing for broader applicability across various fields. 

In this section, we derive the solution to the Yule-Walker type equation for the MAR model.
Consider the $\MAR(1)$ model defined by the equation
$$\mathbf{X}_t = \mathbf{A} \mathbf{X}_{t-1} \mathbf{B}^T + \mathbf{Z}_t.$$
By applying the Kronecker product of $\mathbf{X}_{t-1}^T$ to both sides of the above equation, we obtain 
\begin{equation}
    \label{eq:ar_kronecker_mult}
    \mathbf{X}_t \otimes \mathbf{X}_{t-1}^T = \mathbf{A} \mathbf{X}_{t-1} \mathbf{B}^T \otimes \mathbf{X}_{t-1}^T + \mathbf{Z}_t \otimes \mathbf{X}_{t-1}^T.
\end{equation}
Now, consider the first term on the right-hand side and notice that 
$$\mathbf{A} \mathbf{X}_{t-1} \mathbf{B}^T \otimes \mathbf{X}_{t-1}^T = \mathbf{A} \mathbf{X}_{t-1} \mathbf{B}^T \otimes (\mathbf{X}_{t-1}^T I_{m}),$$
therefore, we can use the property of Theorem~\ref{twr:kronecker_prop_mulitplication}. We have 
\begin{align*}
    (\mathbf{A} \mathbf{X}_{t-1} \mathbf{B}^T) \otimes (\mathbf{X}_{t-1}^T I_{m}) &= 
    ((\mathbf{A} \mathbf{X}_{t-1}) \otimes \mathbf{X}_{t-1}^T) (\mathbf{B}^T \otimes I_{m}) \\
    &= ((\mathbf{A} \mathbf{X}_{t-1}) \otimes (I_{n} \mathbf{X}_{t-1}^T)) (\mathbf{B}^T \otimes I_{m}) \\
    &= (\mathbf{A} \otimes I_{n})(\mathbf{X}_{t-1} \otimes \mathbf{X}_{t-1}^T) (\mathbf{B}^T \otimes I_{m}) \\
    &= \mathbf{A}^\mathbf{B} \mathbf{X}_{t-1} \otimes \mathbf{X}_{t-1}^T \mathbf{B}^B,
\end{align*}
where $\mathbf{A}^B = \mathbf{A} \otimes I_n \in \RR^{mn \times mn}$ and $\mathbf{B}^B = \mathbf{B}^T \otimes I_m \in \RR^{mn \times mn}$. Thus, equation (\ref{eq:ar_kronecker_mult}) has the form
\begin{equation}
    \label{eq:ar_kronecker_mult2}
    \mathbf{X}_t \otimes \mathbf{X}_{t-1}^T = \mathbf{A}^B \mathbf{X}_{t-1} \otimes \mathbf{X}_{t-1}^T \mathbf{B}^B + \mathbf{Z}_t \otimes \mathbf{X}_{t-1}^T.
\end{equation}
We apply the expected value to both sides of the equation (\ref{eq:ar_kronecker_mult2}). We have
\begin{align*}
    \EE[\mathbf{X}_t \otimes \mathbf{X}_{t-1}^T] &= \mathbf{A}^B \EE[\mathbf{X}_{t-1} \otimes \mathbf{X}_{t-1}^T] \mathbf{B}^B + \EE[\mathbf{Z}_t \otimes \mathbf{X}_{t-1}^T] \\
    \Gamma_1^\otimes &= \mathbf{A}^B \Gamma_0^\otimes \mathbf{B}^B + \EE[\mathbf{Z}_t \otimes \mathbf{X}_{t-1}^T],
\end{align*}
where $\Gamma_i^\otimes = \EE[\mathbf{X}_t \otimes \mathbf{X}_{t-i}^T]$ and $\EE[\mathbf{Z}_t \otimes \mathbf{X}_{t-1}^T] = 0_{mn \times mn}$ from the assumption that 
$\mathbf{Z}_t$ is matrix-valued white noise. Thus,
\begin{equation}
\label{eq:YW_mar_1}
    \Gamma_1^\otimes = \mathbf{A}^B \Gamma_0^\otimes \mathbf{B}^B.
\end{equation}
The Yule-Walker equation for matrix-valued $\MAR(1)$ has the form
\begin{equation*}
    \begin{cases}
        \Gamma_1^\otimes = \mathbf{A}^B \Gamma_0^\otimes \mathbf{B}^B \\
        \boldsymbol{\Sigma}^\otimes = \Gamma_0^\otimes - \mathbf{A}^B \Gamma_{-1}^\otimes \mathbf{B}^B.
    \end{cases}
\end{equation*}

Equations can also be derived for the $\MAR(p)$ model. By applying the Kronecker product of $\mathbf{X}_{t-1}^T, \mathbf{X}_{t-2}^T, \ldots, \mathbf{X}_{t-p}^T$ to the model equation and following the same approach, we obtain  
\begin{equation*}
    \begin{cases}
        \Gamma_1^\otimes = \mathbf{A}_1^B \Gamma_0^\otimes \mathbf{B}_1^B +  \mathbf{A}_2^B \Gamma_{-1}^\otimes \mathbf{B}_2^B + \ldots + \mathbf{A}_p^B \Gamma_{1-p}^\otimes \mathbf{B}_p^B \\
        \Gamma_2^\otimes = \mathbf{A}_1^B \Gamma_1^\otimes \mathbf{B}_1^B +  \mathbf{A}_2^B \Gamma_{0}^\otimes \mathbf{B}_2^B + \ldots + \mathbf{A}_p^B \Gamma_{2-p}^\otimes \mathbf{B}_p^B \\
        \vdots \\
        \Gamma_p^\otimes = \mathbf{A}_1^B \Gamma_{p-1}^\otimes \mathbf{B}_1^B +  \mathbf{A}_2^B \Gamma_{p-2}^\otimes \mathbf{B}_2^B + \ldots + \mathbf{A}_p^B \Gamma_{0}^\otimes \mathbf{B}_p^B \\
        \boldsymbol{\Sigma}^\otimes = \Gamma_0^\otimes - \mathbf{A}_1^B \Gamma_{-1}^\otimes \mathbf{B}_1^B - \mathbf{A}_2^B \Gamma_{-2}^\otimes \mathbf{B}_2^B - \ldots - \mathbf{A}_p^B \Gamma_{-p}^\otimes \mathbf{B}_p^B.
    \end{cases}
\end{equation*}
Finding the exact solution to these equations for the estimated matrices $\Gamma_i^{\otimes}$ may not be possible, as the matrices $\Gamma_i^{\otimes}$ may not have the appropriate block structure. However, we can determine the closest solution by minimizing the Frobenius norm. For the $\MAR(1)$ model, this corresponds to the optimization problem 
\begin{equation}
    \label{eq:MAR1_YW_norm}
    \min_{\mathbf{A}_1, \mathbf{B}_1} ||\Gamma_1^\otimes - (\mathbf{A}_1 \otimes I_n) \Gamma_0^\otimes (\mathbf{B}_1^T \otimes I_m) ||_F \quad,
\end{equation}
subject to the constraint $||\mathbf{A}_1||_F = 1$ to ensure uniqueness of the solution. This approach simultaneously solves both the Nearest Kronecker Product problem and the Yule-Walker equations, potentially resulting in a more efficient estimation procedure. However, further insight into the solution method is needed. Currently, our solution to equation (\ref{eq:MAR1_YW_norm}) is based on a Python implementation of the L-BFGS-B algorithm (\cite{LBFGS_alg}). As shown in Section~\ref{sec:YW_numerical_res}, this approach is not computationally efficient.

The matrices $\Gamma_k^\otimes = \EE[\mathbf{X}_t \otimes \mathbf{X}_{t-k}^T]$ contain the same elements as the matrices
$\Gamma_k = \EE[\vect(\mathbf{X}_t) \vect(\mathbf{X}_{t-k})^T]$. For a zero-mean process, we have $\Gamma_k = \COV(\vect(\mathbf{X}_t), \vect(\mathbf{X}_{t-k}))$. This means that $\Gamma_k^\otimes$ can be computed by first calculating the covariance matrix $\Gamma_k$ and then permuting its elements accordingly. It can be shown (see Theorem~\ref{twr:cov_matrix_permutation}) that 
$$\Gamma_k^\otimes = [\gamma_{ij}^\otimes]_{i,j=0,\ldots,mn-1} = [\gamma_{(i \% m)*n + (j\% n) , (i//m) *m + j//n}]_{i,j=0,\ldots,mn-1},$$
where $\Gamma_k = [\gamma_{ij}]_{i,j=0,\ldots,mn-1}$, $\%$ denotes the remainder, and $//$ truncating integer division. Thus, in implementation, we can compute the autocovariance for the vectorized time series $\mathbf{X}_t$ and then adjust the indices to obtain the Kronecker-structured matrix $\Gamma_k^\otimes$.

\section{Burg's method}
\label{sec:Burg}
A second approach to the estimation of $\MAR(p)$ parameters is based on a Burg-type method. This method was originally developed for one-dimensional time series by \cite{burg_orginal}; detailed derivations can be found in \cite{burg_onedim} and \cite{brock_davis}. The multivariate (vector-valued) case was discussed by \cite{burg_multivariate}.
This algorithm is particularly effective for short time series and provides numerically stable results. The estimated AR model satisfies both stationarity and causality conditions, making it a robust choice for modeling time series data. Therefore, we aim to derive Burg's algorithm for matrix-valued time series.

Consider a sample $\mathbf{X}_1, \ldots, \mathbf{X}_N$, and define the forward prediction residual $\mathbf{f}_t^{(k)} \in \RR^{m \times n}$ and the backward prediction residual $\mathbf{b}_t^{(k)} \in \RR^{m \times n}$ for the $\MAR(k)$ model as
\begin{align*}
    \mathbf{f}_t^{(k)} &= \mathbf{X}_t - \hat{\mathbf{X}}_t(k)  \\
    \mathbf{b}_t^{(k)} &= \mathbf{X}_{t-k} -  \hat{\mathbf{X}}^b_{t-k}(k),
\end{align*}
where 
\begin{align*}
    \hat{\mathbf{X}}_t(k) &= \sum\limits_{i=1}^k \mathbf{A}_k(i) \mathbf{X}_{t-i} (\mathbf{B}_k(i))^T \\
    \hat{\mathbf{X}}^\mathbf{b}_t(k) &= \sum\limits_{i=1}^k \mathbf{A}_k(i) \mathbf{X}_{t+i} (\mathbf{B}_k(i))^T
\end{align*}
are, respectively, the best forward and the best backward (in time) linear predictor of $\mathbf{X}_{t}$. The term "best linear predictor of $\mathbf{X}_t$" means that the coefficients $\mathbf{A}_k(1), \ldots, \mathbf{A}_k(k)$ and $\mathbf{B}_k(1), \ldots, \mathbf{B}_k(k)$ minimize the average squared error
$$\EE\left[||\mathbf{X}_t - \hat{\mathbf{X}}_t(k)||^2_F\right],$$
where $||\cdot||_F$ denotes the Frobenius norm. Note that this is equivalent to finding the minimum of
$\EE\left[||\vect(\mathbf{X}_t) - \vect(\hat{\mathbf{X}}_t(k))||^2\right]$ under the Euclidean norm $||\cdot||$.

Burg's method aims to find the coefficients $\mathbf{A}_k(k)$ and $\mathbf{B}_k(k)$ that minimize the total prediction error
\begin{equation}
    \label{eq:prediction_error_E_k}
    E^{(k)} = \sum\limits_{t = k+1}^N ||\mathbf{f}_t^{(k)}||^2_F + ||\mathbf{b}_t^{(k)}||^2_F.
\end{equation}
To solve this problem, we first need to express $E^{(k)}$ in terms of $\mathbf{A}_k(k)$, $\mathbf{B}_k(k)$, and the coefficients known from step $k-1$. Therefore, it is necessary to establish the relationship between $\mathbf{f}_t^{(k)}$, $\mathbf{b}_t^{(k)}$, and the corresponding residuals $\mathbf{f}_t^{(k-1)}$ and $\mathbf{b}_t^{(k-1)}$.

Consider the associated $\VVAR(k)$ model, defined by the equation
\begin{equation*}
\label{eq:VAR_p}
    \vect(\mathbf{X}_t) = (\mathbf{B}_1 \otimes \mathbf{A}_1) \vect(\mathbf{X}_{t-1}) + \ldots + (\mathbf{B}_k \otimes \mathbf{A}_k) \vect(\mathbf{X}_{t-k}) + \vect(\mathbf{Z}_t).
\end{equation*}
In this case, the equations for updating the forward and backward errors take the form 
\begin{align*}
    \vect(\mathbf{f}_t^{(k)}) &= \vect(\mathbf{f}_t^{(k-1)}) - (\mathbf{B}_k(k) \otimes \mathbf{A}_k(k)) \vect(\mathbf{b}_{t-1}^{(k-1)}) \\
    \vect(\mathbf{b}_t^{(k)}) &= \vect(\mathbf{b}_{t-1}^{(k-1)}) - (\mathbf{B}_k(k) \otimes \mathbf{A}_k(k)) \vect(\mathbf{f}_t^{(k-1)}).
\end{align*}
Thus, after applying the reverse vectorization operation, we obtain
\begin{align}
\label{eq:forward_backward}
\begin{split}
    \mathbf{f}_t^{(k)} &= \mathbf{f}_t^{(k-1)} - \mathbf{A}_k(k) \mathbf{b}_{t-1}^{(k-1)} (\mathbf{B}_k(k))^T \\
    \mathbf{b}_t^{(k)} &= \mathbf{b}_{t-1}^{(k-1)} - \mathbf{A}_k(k) \mathbf{f}_t^{(k-1)} (\mathbf{B}_k(k))^T.
\end{split}
\end{align}
Next, we can substitute equation (\ref{eq:forward_backward}) into equation (\ref{eq:prediction_error_E_k}). To have more managable equations, we use notation $\mathbf{A}_k(k) = \mathbf{A}$, $ \mathbf{B}_k(k) = \mathbf{B}$, $\mathbf{b}_t = \mathbf{b}_t^{(k-1)}$ and $\mathbf{f}_t = \mathbf{f}_t^{(k-1)}$. 
If we fix the matrix $\mathbf{B}$, compute the derivative with respect to $\vect(\mathbf{A})$, and set it equal to zero, we obtain 
\begin{equation}
\label{eq:burg_solve_A}
    \mathbf{A} =  
    \sum_{t=k+1}^{N} \left[
     \mathbf{f}_t\mathbf{B}\mathbf{b}_{t-1}^T +   \mathbf{b}_{t-1}\mathbf{B} \mathbf{f}_t^T\right]
     \left( \sum_{t=k+1}^{N} \left[ \left(\mathbf{B} \mathbf{b}_{t-1}^T\right)^2 +  \left(\mathbf{B}\mathbf{f}_t^T\right)^2 \right] \right)^{-1}
\end{equation}
where the square of a matrix $\mathbf{B}$ is defined as $\mathbf{B}^2 := \mathbf{B}^T \mathbf{B}$. Next, if we fix the matrix $\mathbf{A}$, compute the derivative with respect to $\vect(\mathbf{B}^T)$, and set it equal to zero, we obtain 
\begin{equation}
\label{eq:burg_solve_B}
    \mathbf{B}^T = \left( \sum_{t=k+1}^{N} \left[ \left(\mathbf{A}\mathbf{b}_{t-1}\right)^2 -  \left(\mathbf{A} \mathbf{f}_t \right)^2 \right] \right)^{-1}
    \sum_{t=k+1}^{N} \left[
     \mathbf{b}_{t-1}^T\mathbf{A}^T \mathbf{f}_t + \mathbf{f}_t^T\mathbf{A}^T \mathbf{b}_{t-1} \right].
\end{equation}
More details on the computation of these solutions can be found in Theorem~\ref{twr:min_Ek}. The inverse matrices in the equations above may not always exist; however, we can consider a generalized matrix inverse, such as the Moore–Penrose inverse. Using this type of inverse yields the minimum norm (or least squares) solution, see~\cite{general_inv}. 
To solve the equations, we initialize the algorithm by selecting random matrices $\mathbf{A}$ and $\mathbf{B}$, and then iteratively update both matrices using equations (\ref{eq:burg_solve_A}) and (\ref{eq:burg_solve_B}). At each step, the matrix $\mathbf{A}$ should also be normalized by setting $\mathbf{A} := \frac{\mathbf{A}}{||\mathbf{A}||_F}$ to ensure that the solution remains unique.

Furthermore, in each step, we need to compute the parameters $\mathbf{A}_k(1), \ldots, \mathbf{A}_k(k-1)$ and $\mathbf{B}_k(1), \ldots, \mathbf{B}_k(k-1)$. For the standard $\VVAR(k)$ model described by equation (\ref{eq:VAR_p}), we have
\begin{equation}
\label{eq:update_AB}
    \mathbf{B}_k(i) \otimes \mathbf{A}_k(i) = \mathbf{B}_{k-1}(i) \otimes \mathbf{A}_{k-1}(i) - \left[\mathbf{B}_k(k) \otimes \mathbf{A}_k(k)\right] \left[\mathbf{B}_{k-1}(k-i) \otimes \mathbf{A}_{k-1}(k-i)\right],
\end{equation}
for $i=1,\ldots,k-1$. The parameters $\mathbf{A}_k(i)$ and $\mathbf{B}_k(i)$ can be obtained by solving the Nearest Kronecker Product problem for the equation above, i.e.,
$$\min_{\mathbf{A}_k(i), \mathbf{B}_k(i)} ||\mathbf{X} - \mathbf{B}_k(i) \otimes \mathbf{A}_k(i)||_F \quad,$$
where $\mathbf{X}$ corresponds to the right-hand side of equation (\ref{eq:update_AB}).

The algorithm for solving the $\MAR(p)$ model can be described in the following steps: 
\begin{enumerate}[Step 1:] 
    \item Initialize the algorithm by setting $\mathbf{f}_t^{(0)} = \mathbf{X}_t$ and $\mathbf{b}_t^{(0)} = \mathbf{X}_t$, and repeat the following steps for $k = 1, \ldots, p$. 
    \item Compute $\mathbf{A}_k(k)$ and $\mathbf{B}_k(k)$ using formulas (\ref{eq:burg_solve_A}) and (\ref{eq:burg_solve_B}). 
    \item Calculate the parameters $\mathbf{A}_k(1), \ldots, \mathbf{A}_k(k-1), \mathbf{B}_k(1), \ldots, \mathbf{B}_k(k-1)$ using the formula (\ref{eq:update_AB}).
    \item Update $\mathbf{f}_t^{(k)}$ and $\mathbf{b}_t^{(k)}$ using formula (\ref{eq:forward_backward}).
\end{enumerate}
The output of the algorithm consists of the matrices $\mathbf{A}_p(1), \ldots, \mathbf{A}_p(p)$ and $\mathbf{B}_p(1), \ldots, \mathbf{B}_p(p)$, which serve as the parameters of the $\MAR(p)$ model.

\section{Numerical results}
\label{sec:numerical_results}
In this section, we present the results of benchmarking various estimation approaches. We compare the Yule-Walker and Burg methods for the $\MAR(1)$ model, as described in the preceding sections, with the LSE method introduced by \cite{matrix_ar}. Additionally, we evaluate the approach of estimating the corresponding $\VVAR(1)$ model and solving the Nearest Kronecker Product (NKP) problem to obtain the coefficient matrices $\mathbf{A}$ and $\mathbf{B}$.
\subsection{Testing methodology}
The results are based on artificially generated data. For each model, given matrix size (only square matrices were considered) and time series length, 100 vector-valued time series were generated using the $\VVAR(1)$ equation. The parameters of the $\VVAR(1)$ model were initialized randomly and then adjusted using the following approach.

The matrix $\boldsymbol{\phi}^{(r)} \in \RR^{m \times m}$ is randomly drawn from a standard normal distribution. To ensure the stationarity of the generated time series, i.e., $\rho(\boldsymbol{\phi}) < 1$, we rescale it as follows:
$$\boldsymbol{\phi} := \frac{1}{\rho(\boldsymbol{\phi}^{(r)}) + 1} \boldsymbol{\phi}^{(r)}.$$
Next, we construct the covariance matrix $\boldsymbol{\Sigma} \in \RR^{m \times m}$ for the white noise process. A matrix $\mathbf{S} \in \RR^{m \times m}$ is randomly sampled from a standard normal distribution. To ensure that $\boldsymbol{\Sigma}$ is symmetric, we compute
$$\mathbf{S}^s = \frac{1}{2}(\mathbf{S} + \mathbf{S}^T).$$
To make $\boldsymbol{\Sigma}$ positive semi-definite, we perform an eigendecomposition:
$$\mathbf{S}^s = Q \Lambda Q^T, $$
where $\Lambda = \diag(\lambda_i)$. We then define $\Lambda^+ := \diag(|\lambda_i|)$ and set
$$\boldsymbol{\Sigma} := Q \Lambda^+ Q^T.$$
Finally, we generate the vector time series $X_1, \ldots, X_N$ using the equation
$$X_t = \phi X_{t-1} + \epsilon_t, $$
where $\epsilon_t$ are realizations of white noise with distribution $\mathcal{N}(0, \boldsymbol{\Sigma})$.

The comparison of models was conducted across time series lengths ranging from 100 to 500 and matrix sizes from 2 to 10. Each model was fitted using a generated sample and evaluated on the subsequent 100 generated observations. For example, for a time series of length 100, a series of length 200 was generated: the first 100 observations were used for training, and the remaining 100 were used for testing the model using the selected evaluation metrics.

For each iteration, the average execution time was recorded. Additionally, for each run, standard model performance metrics, such as MAE, SMAPE, and RMSE, were computed based on the test data. The exact formula for the metrics used can be found in Definition~\ref{def:metrics}. In parallel, Mardia's test for joint multivariate normality of the residuals was conducted (\cite{mardia1970measures}). Due to technical limitations of the implemented test ("multivariate\_normality" from the Python package "pingouin"), the calculations for $10 \times 10$ matrix-valued time series could not be performed and were therefore not included in Table~\ref{tab:mardia_p_values}.
\begin{table}[H]
\centering
\caption{P-values of Mardia's tests for model residuals}
\label{tab:mardia_p_values}
\begin{tabular}{lrrrr}
\toprule
Model & Minimum & Quantile 1\% & Quantile 5\% & Median \\
\midrule
MAR(1) Burg & 0.0000 & 0.0093 & 0.0958 & 0.3461 \\
MAR(1) LSE & 0.0000 & 0.0131 & 0.0852 & 0.3457 \\
MAR(1) Yule-Walker & 0.0000 & 0.0152 & 0.0726 & 0.3454 \\
VAR(1) Burg & 0.0002 & 0.0179 & 0.0983 & 0.3475 \\
VAR(1) Yule-Walker & 0.0000 & 0.0113 & 0.0849 & 0.3427 \\
vec MAR(1) Burg & 0.0000 & 0.0136 & 0.0886 & 0.3446 \\
vec MAR(1) Yule-Walker & 0.0000 & 0.0139 & 0.0867 & 0.3422 \\
\bottomrule
\end{tabular}
\end{table}

\subsection{Yule-Walker}
\label{sec:YW_numerical_res}
We compared the Yule-Walker method described in Section~\ref{sec:YW} with the standard $\VVAR(1)$ model estimated using the classical vector Yule-Walker equations, and with the vectorized $\MAR(1)$ model, where the $\AR(1)$ model was first estimated and then the Nearest Kronecker Product (NKP) problem was solved to obtain the matrices $\mathbf{A}$ and $\mathbf{B}$.

We observe that the MAE and RMSE metrics are nearly identical across all data samples (Figure~\ref{fig:YW_d300} and Figure~\ref{fig:YW_m5}). This suggests that the estimation method produces errors of consistent magnitude, without significant skewness, heavy tails, or heteroskedasticity. Moreover, the Mardia's tests (Table~\ref{tab:mardia_p_values}) confirm the normality of the model residuals in nearly all cases, indicating a good model fit.

The RMSE and SMAPE values are similar overall; however, on average, the SMAPE is lower for the standard $\VVAR(1)$ model, while the RMSE is lower for the $\MAR(1)$ model. This comparable predictive performance indicates that the MAR-based approach is well suited for high-dimensional datasets, as it achieves similar accuracy while using significantly fewer parameters.

Regarding execution time, the Yule-Walker method for the $\MAR(1)$ model is slower than the benchmark methods (vectorized $\MAR(1)$ and $\VVAR(1)$) in high-dimensional settings. However, for low-dimensional but longer time series, its performance is comparable to that of the other two methods.

\begin{figure}[H]
    \centering
    \includegraphics[width=.9\linewidth]{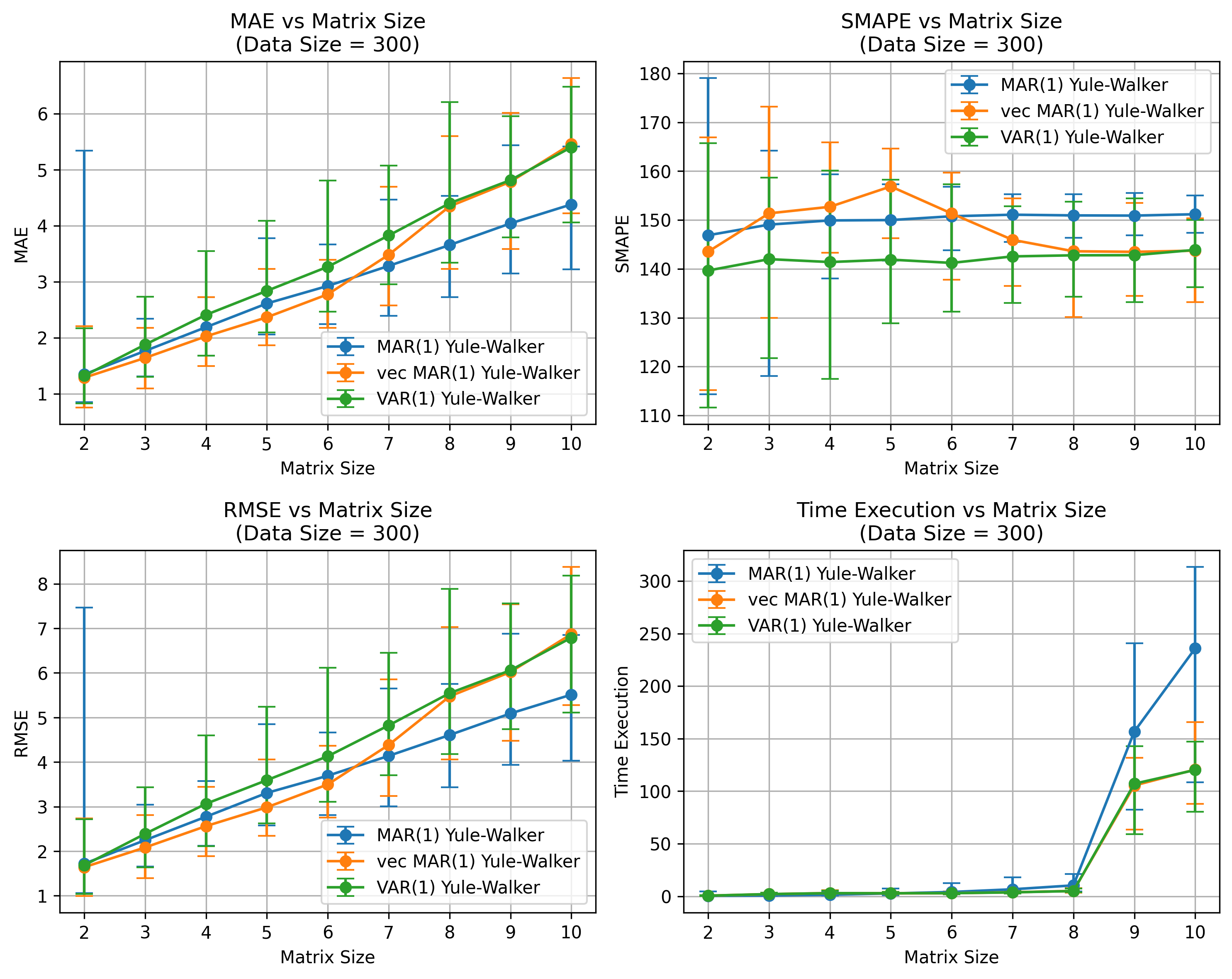}
    \caption{Comparison of the Yule-Walker approach for the $\MAR(1)$ and $\VVAR(1)$ models for a time series of length 300.}
    \label{fig:YW_d300}
\end{figure}
\begin{figure}[H]
    \centering
    \includegraphics[width=.9\linewidth]{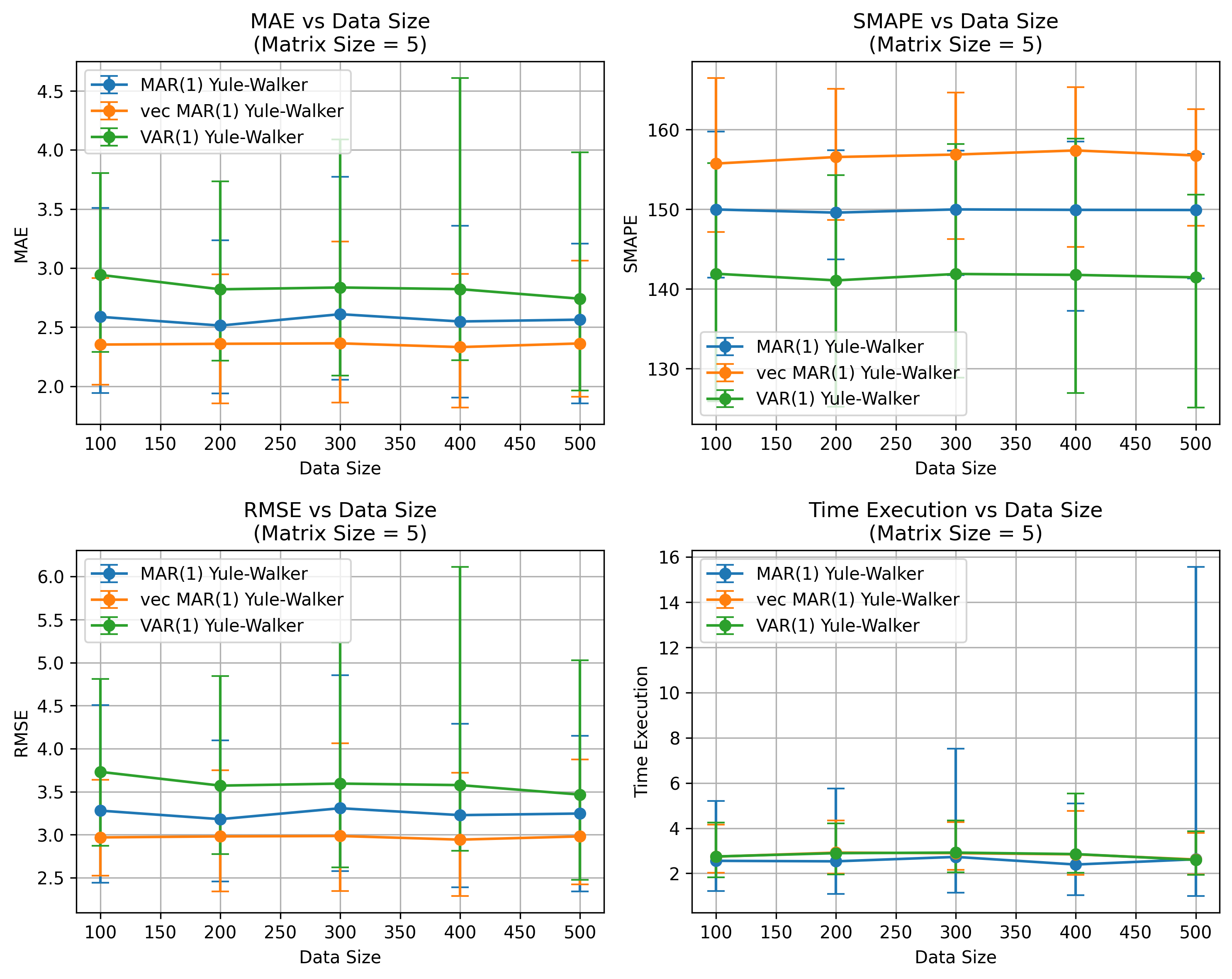}
    \caption{Comparison of the Yule-Walker approach for the $\MAR(1)$ and $\VVAR(1)$ models with a matrix size of 5.}
    \label{fig:YW_m5}
\end{figure}
\subsection{Burg's method}
Similarly, the Burg method described in Section~\ref{sec:Burg} was compared with the Burg method applied to the classical $\VVAR(1)$ model and the vectorized $\MAR(1)$ model.

Similar to the Yule-Walker method, the MAE and RMSE metrics yield nearly identical results, and the Mardia's tests (Table~\ref{tab:mardia_p_values}) indicates a good model fit. Some exceptions are observed for the $\VVAR(1)$ model; however, 95\% of the samples still have p-values greater than 0.175. In terms of the SMAPE metric, the $\VVAR(1)$ model performs noticeably better; however, the lower RMSE for the $\MAR(1)$ model suggests more accurate point predictions (Figure~\ref{fig:burg_d300} and Figure~\ref{fig:burg_m5}).

The execution time for both the $\MAR(1)$ and vectorized $\MAR(1)$ methods is considerably longer than for the classical $\VVAR(1)$ model, primarily due to the extensive matrix multiplications involved in the estimation process. This overhead could likely be mitigated with more efficient code implementation.

\begin{figure}[H]
    \centering
    \includegraphics[width=.9\linewidth]{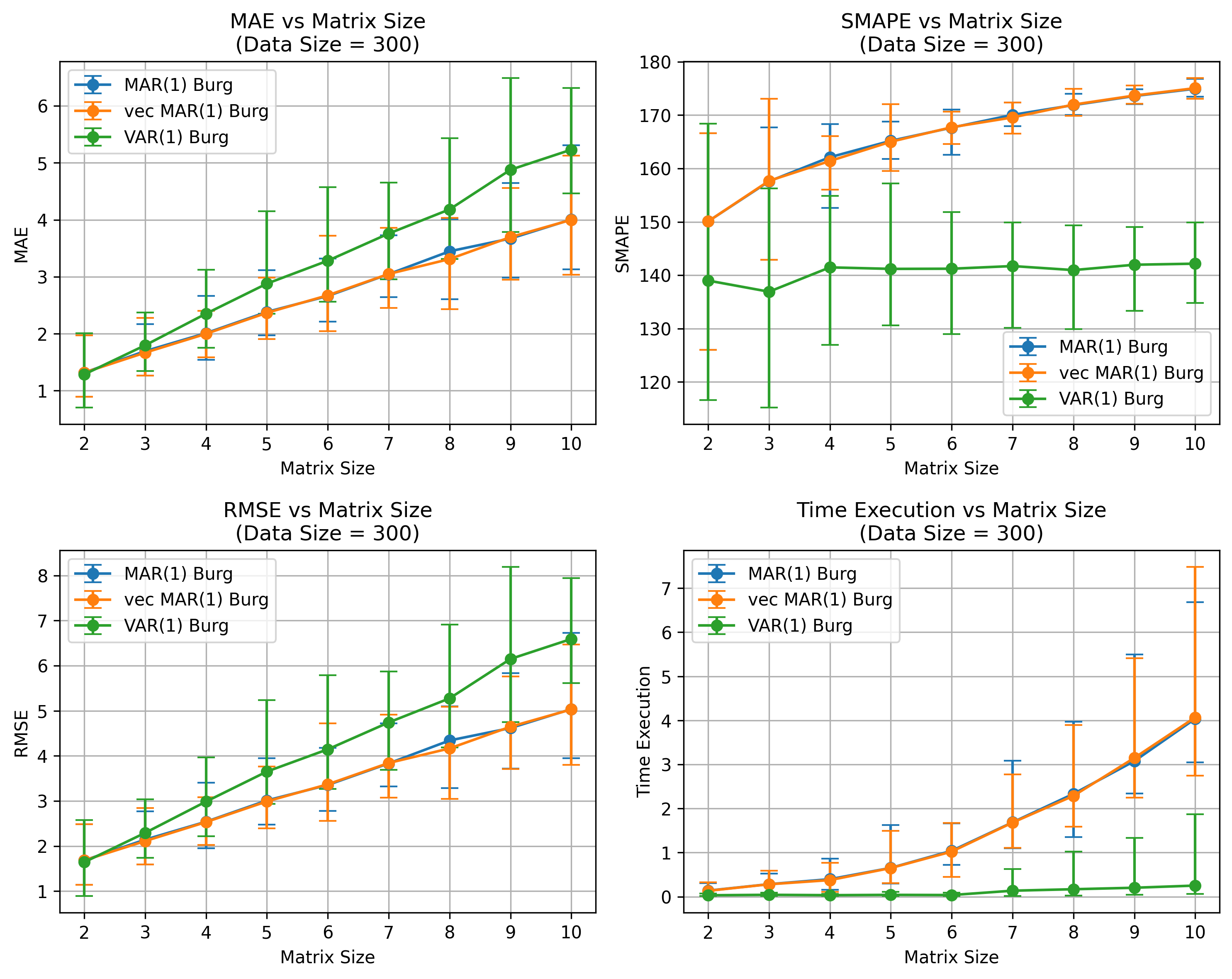}
    \caption{Comparison of the Burg approach for the $\MAR(1)$ and $\VVAR(1)$ models for a time series of length 300. For metrics charts, the blue and orange lines overlap.}
    \label{fig:burg_d300}
\end{figure}

\begin{figure}[H]
    \centering
    \includegraphics[width=.9\linewidth]{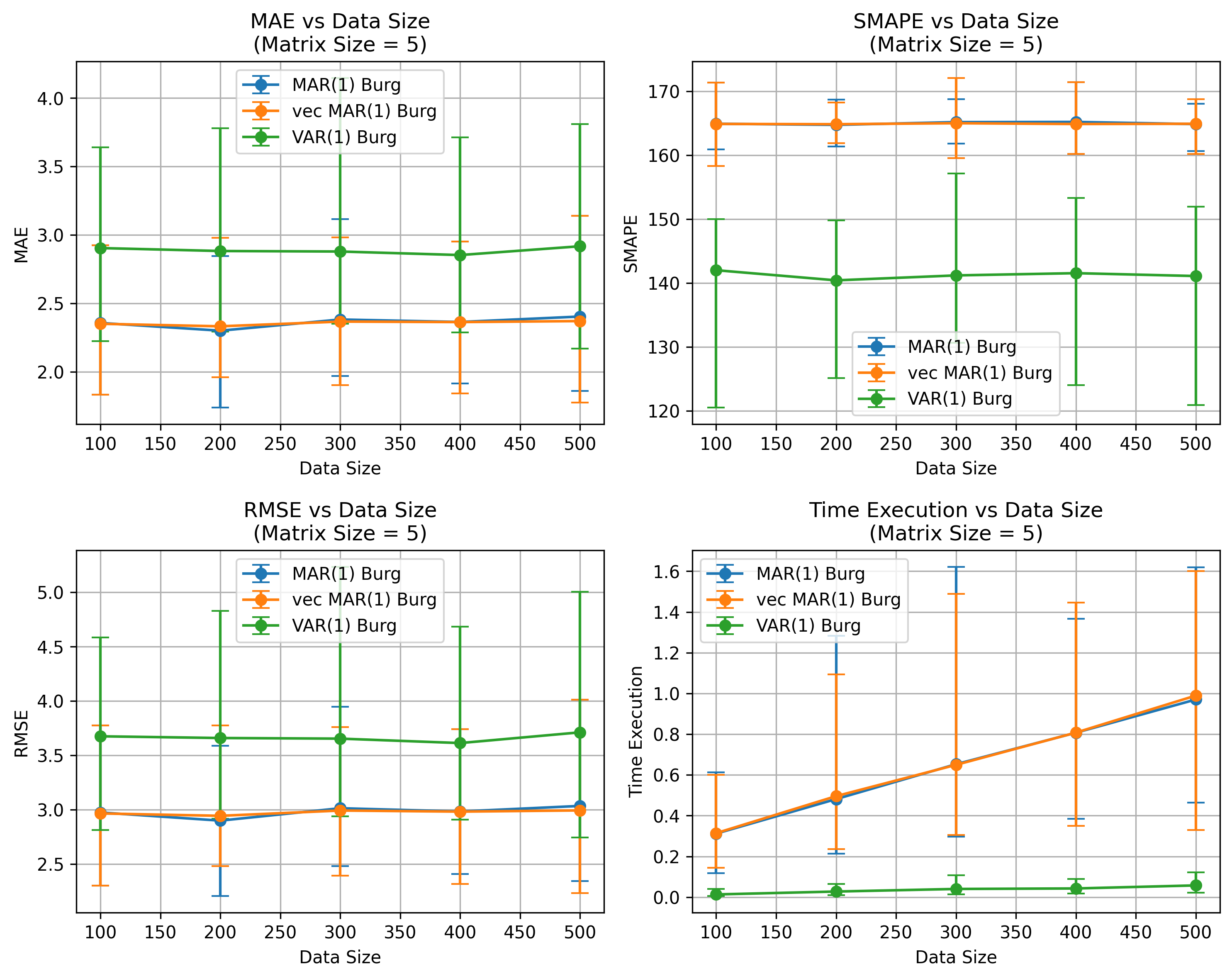}
    \caption{Comparison of the Burg approach for the $\MAR(1)$ and $\VVAR(1)$ models with a matrix size of 5. For metrics charts, the blue and orange lines overlap.}
    \label{fig:burg_m5}
\end{figure}

\subsection{Comparison of YW, Burg and LSE}
The comparison of the methods described in this article with the LSE method introduced by \cite{matrix_ar} shows that all three approaches yield similar MAE and RMSE values. In terms of SMAPE, the Yule-Walker and LSE methods perform better (Figure~\ref{fig:als_d300} and Figure~\ref{fig:als_m5}).

Regarding execution time, the LSE method is faster for short but high-dimensional time series; however, its performance deteriorates as the length of the time series increases. The Burg method is consistently fast across all scenarios, while the Yule-Walker method performs exponentially worse as the dimensionality of the data increases. The consistency and stability of the Burg method are clearly illustrated in Figure~\ref{fig:lse_burg_surface}, where the surface plot for Burg appears significantly smoother.

\begin{figure}[H]
    \centering
    \includegraphics[width=.9\linewidth]{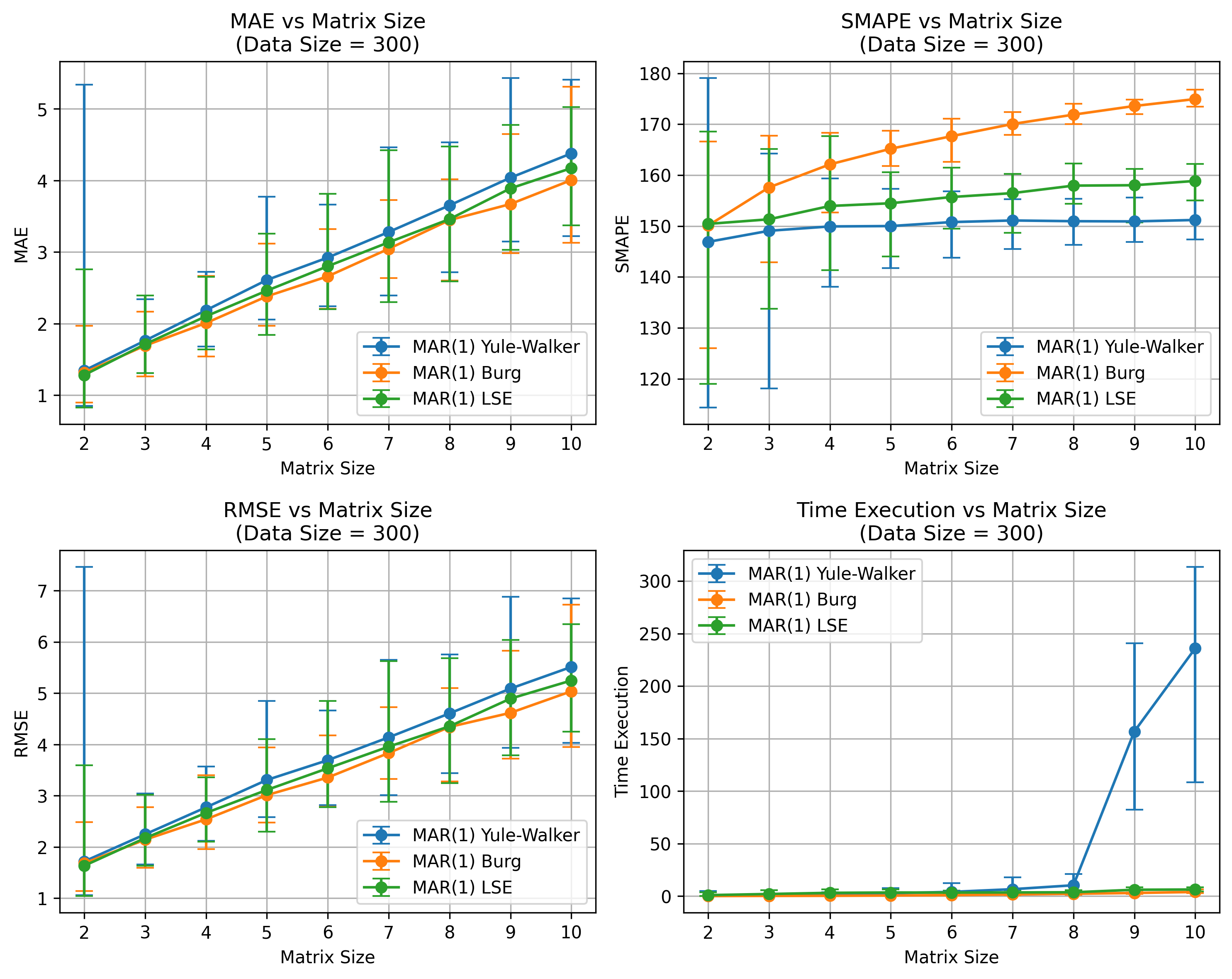}
    \caption{Comparison of the YW, Burg and LSE approach for the $\MAR(1)$ model for a time series of length 300. For execution time chart, the red and orange lines overlap.}
    \label{fig:als_d300}
\end{figure}

\begin{figure}[H]
    \centering
    \includegraphics[width=.9\linewidth]{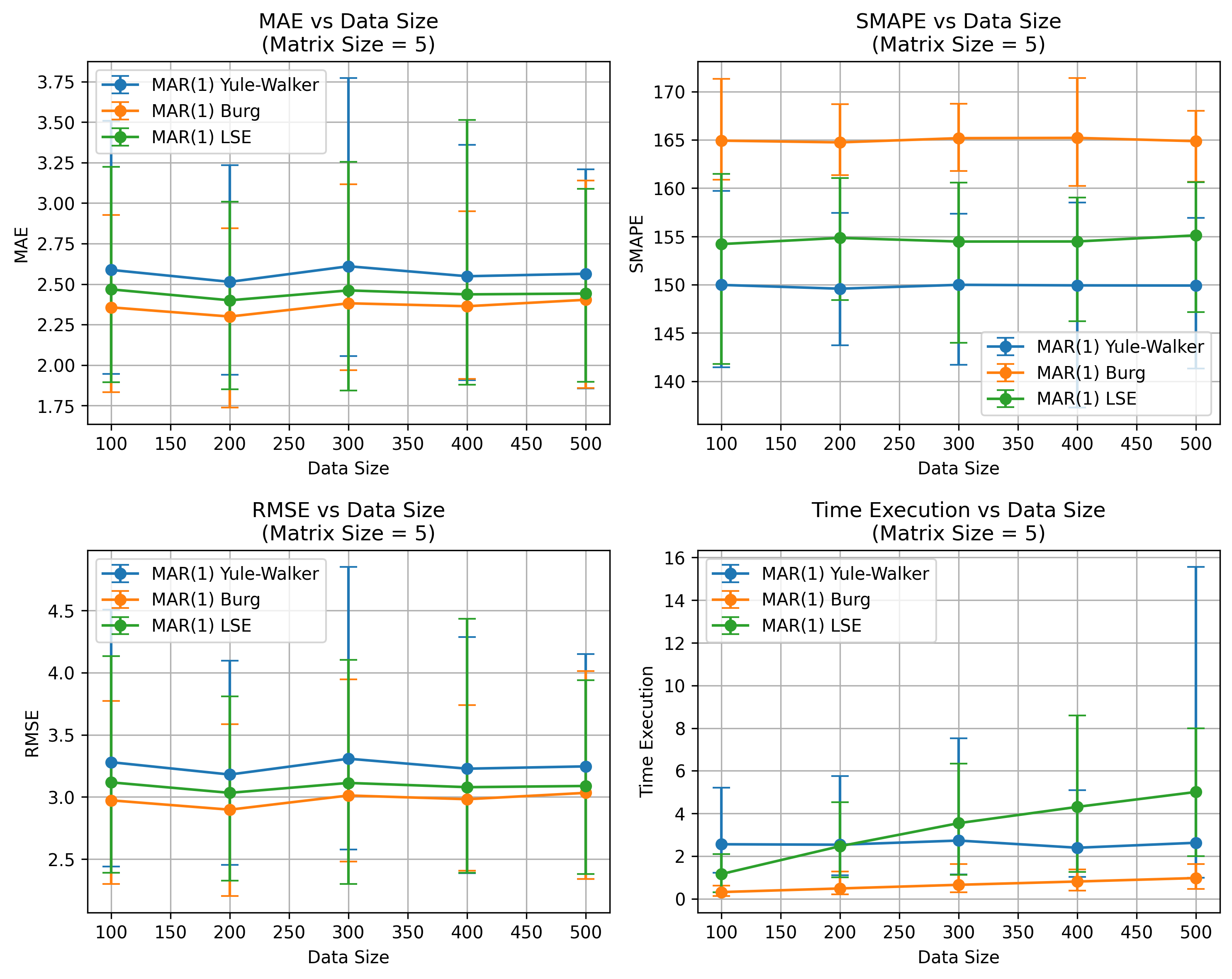}
    \caption{Comparison of the YW, Burg and LSE approach for the $\MAR(1)$ model with a matrix size of 5.}
    \label{fig:als_m5}
\end{figure}

\begin{figure}[H]
    \centering
    \includegraphics[width=.9\linewidth]{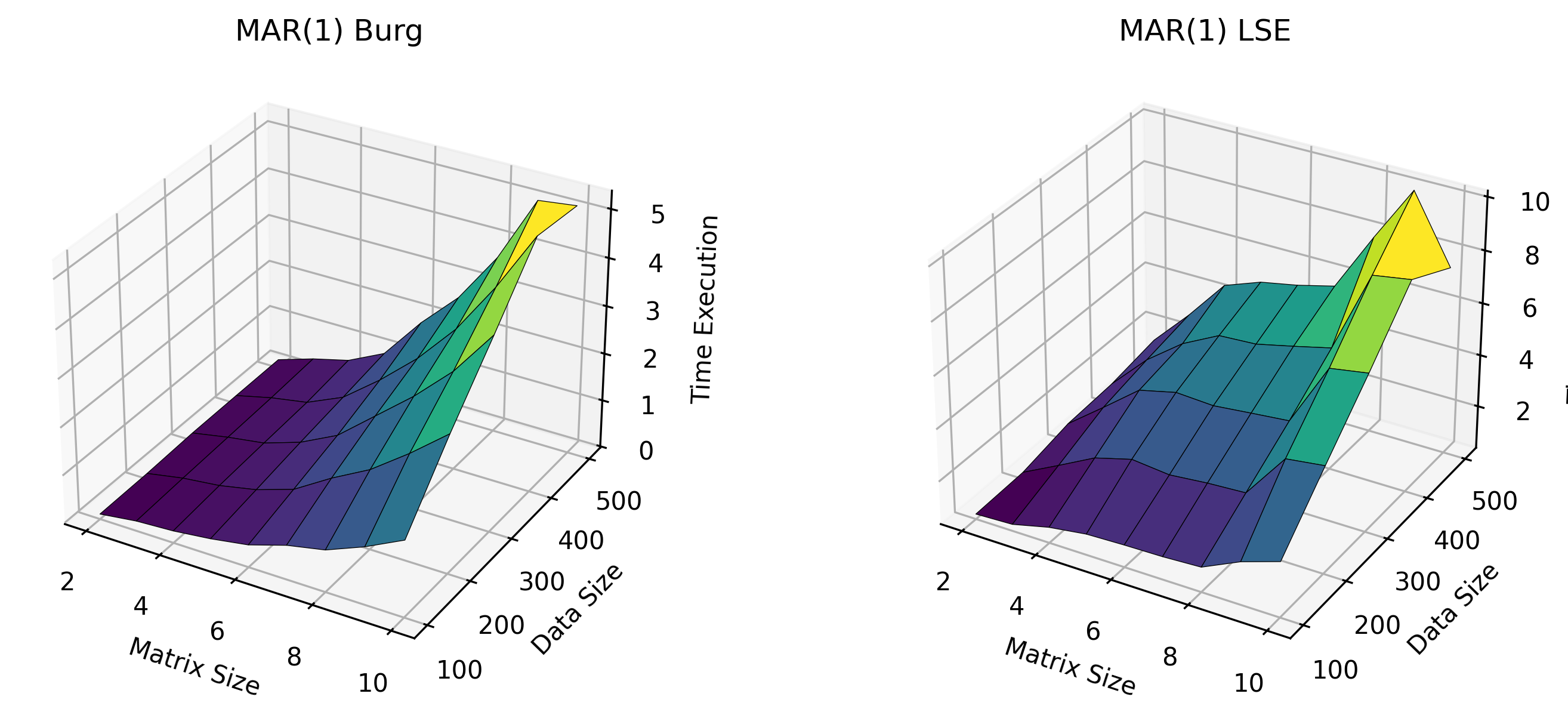}
    \caption{Comparison of the execution time of the Burg and LSE approach for the $\MAR(1)$ }
    \label{fig:lse_burg_surface}
\end{figure}

\section{Conclusions}
MAR models provide a similarly good fit as VAR models but with fewer parameters, making them preferable when we do not want to overfit the model.

The MAR-based approach is better suited for large-dimensional datasets, as it involves significantly fewer parameters while maintaining a comparable quality of fit to the VAR-based approach. For some metrics, such as MAE and RMSE, matrix-valued models can even yield better results.

The Burg and Yule-Walker (YW) methods show an advantage over least squares estimation (LSE) when the time series is long. Moreover, the Burg and YW methods preserve the autocorrelation structure better than maximum likelihood estimation (MLE) or LSE methods, as these estimators are directly based on autocovariance matrices.

\section*{Acknowledgements}
This article has been completed while the author was the Doctoral Candidate in the Interdisciplinary Doctoral School at the Lodz University of Technology, Poland. 

\clearpage

\appendix
\section{Appendix - known facts and definitions}
This section presents general facts, standard definitions, and key properties of the Kronecker product.

\renewcommand{\thetheorem}{A\arabic{theorem}}
\renewcommand{\thedefinition}{A\arabic{definition}}

\setcounter{theorem}{0}

\begin{definition}
\label{def:sectral_radious}
    Let $\lambda_1, \ldots, \lambda_n$ be the eigenvalues of a matrix $A \in \RR^{n \times n}$. \textbf{Spectral radius} of $A$ is defined as
    $$\rho(A) = \max(|\lambda_1|, \ldots, |\lambda_n|).$$
\end{definition}

\begin{definition}
\label{def:vectorization}
    The \textbf{vectorization} of matrix $A \in \RR^{m \times n}$ is a linear transformation of matrix $A = [a_{ij}]_{\substack{i = 1,\ldots,m\\ j = 1,\ldots,n}}$ to a vector i.e. 
    $$\vect(A) = [ a_{1,1}, \ldots, a_{m,1}, a_{1,2}, \ldots, a_{m,2}, \ldots, a_{1,n}, \ldots, a_{m,n}]^T.$$ 
\end{definition}

\begin{theorem}
\label{twr:kronecker_prop_mulitplication}
    If $A, B, C$, and $D$ are matrices of appropriate dimensions to allow the multiplication of $AC$ and $BD$, then

    $$(A \otimes B) (C \otimes D) = (AC) \otimes (BD).$$
\end{theorem}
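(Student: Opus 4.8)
The plan is to prove the identity by direct computation using the block structure of the Kronecker product. First I would fix dimensions so the statement is meaningful: say $A \in \RR^{p \times q}$, $C \in \RR^{q \times t}$, $B \in \RR^{r \times s}$, and $D \in \RR^{s \times u}$. Then $A \otimes B \in \RR^{pr \times qs}$ and $C \otimes D \in \RR^{qs \times tu}$, so the left-hand side is a well-defined $pr \times tu$ matrix, which is also the size of $(AC) \otimes (BD)$. Then I would recall the defining block form: writing $A = [a_{ij}]$ and $C = [c_{jk}]$, the matrix $A \otimes B$ is the $p \times q$ array of $r \times s$ blocks whose $(i,j)$ block is $a_{ij} B$, and likewise $C \otimes D$ is the $q \times t$ array of $s \times u$ blocks whose $(j,k)$ block is $c_{jk} D$.

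The key step is block multiplication. The column-block partition of $A \otimes B$ (into $q$ groups of $s$ columns) matches the row-block partition of $C \otimes D$ (into $q$ groups of $s$ rows), so the blocks are conformable and block multiplication is valid. The $(i,k)$ block of the product is then
\begin{equation*}
\sum_{j=1}^{q} (a_{ij} B)(c_{jk} D) = \left( \sum_{j=1}^{q} a_{ij} c_{jk} \right) BD = (AC)_{ik}\, (BD),
\end{equation*}
where we used that scalars pull out of matrix products. Since the $(i,k)$ block of $(AC) \otimes (BD)$ is by definition $(AC)_{ik}(BD)$, the two matrices agree block by block and hence are equal.

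The only thing requiring any care — the ``main obstacle'', though it is genuinely routine — is the bookkeeping: verifying that the natural block partitions are conformable (a dimension count) and that the scalars $a_{ij}, c_{jk}$ may be pulled through the block products. As an alternative to the block argument one could proceed entrywise, using the explicit formula $(A \otimes B)_{(i-1)r+\ell,\,(j-1)s+m} = a_{ij} b_{\ell m}$ together with the definition of matrix multiplication and a reindexing of the double sum over the pairs $(j,m)$ as an iterated sum over $j$ and then $m$; this is equivalent but notationally heavier, so I would present the block version and at most remark on the entrywise one.
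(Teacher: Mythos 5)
Your proof is correct. Note, however, that the paper itself offers no proof of this statement: it appears in the appendix of ``known facts and definitions'' as a standard property of the Kronecker product, cited without argument. Your block-multiplication derivation --- partitioning $A\otimes B$ into $r\times s$ blocks $a_{ij}B$ and $C\otimes D$ into $s\times u$ blocks $c_{jk}D$, checking conformability, and computing the $(i,k)$ block of the product as $\sum_{j} a_{ij}c_{jk}\,BD = (AC)_{ik}(BD)$ --- is the standard textbook proof, and the dimension bookkeeping you supply is exactly the part that needs care. There is no gap.
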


\begin{theorem}
\label{twr:three_matrix_multiplication}
    Let $A \in \RR^{k \times l},\, B \in \RR^{l \times m}$ and $C \in \RR^{m \times n}$. The multiplication 
    of three matrices can be expressed in terms of Kronecker product multiplied by a vectorized matrix
    $$\vect(ABC) = (C^T \otimes A) \vect(B).$$
\end{theorem}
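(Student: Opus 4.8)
The plan is to verify the identity columnwise, using the column-major convention for $\vect$ fixed in Definition~\ref{def:vectorization}, and to bootstrap from two elementary special cases. First I would prove $\vect(AX) = (I \otimes A)\vect(X)$ and $\vect(XC) = (C^T \otimes I)\vect(X)$ whenever the products make sense. For the first, the $j$-th column of $AX$ is $A X_{\cdot j}$, so the $j$-th block of $\vect(AX)$ equals $A X_{\cdot j}$, which is exactly the $j$-th block produced by the block-diagonal matrix $I \otimes A$ acting on $\vect(X)$. For the second, the $j$-th column of $XC$ is $\sum_{i} C_{ij} X_{\cdot i}$, while the $(j,i)$ block of $C^T \otimes I$ is $(C^T)_{ji} I = C_{ij} I$, so the two sides again agree block by block.

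Next I would combine these: writing $ABC = A(BC)$ and applying the first special case gives $\vect(ABC) = (I_n \otimes A)\vect(BC)$, and applying the second to $BC$ gives $\vect(BC) = (C^T \otimes I_l)\vect(B)$. Substituting and invoking Theorem~\ref{twr:kronecker_prop_mulitplication} with the factorization $(I_n \otimes A)(C^T \otimes I_l) = (I_n C^T)\otimes(A I_l) = C^T \otimes A$ yields $\vect(ABC) = (C^T \otimes A)\vect(B)$. A quick conformability check closes the argument: $C^T \otimes A \in \RR^{kn \times lm}$, $\vect(B) \in \RR^{lm}$, and $\vect(ABC) \in \RR^{kn}$, so every product above is well defined. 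Equivalently, one can skip the two-step reduction and compute a generic block of each side directly: the $j$-th block of $\vect(ABC)$ is the $j$-th column of $ABC$, namely $AB C_{\cdot j} = \sum_{i=1}^{m} C_{ij}\,A B_{\cdot i}$, which matches the $j$-th block $\sum_{i=1}^{m} C_{ij}\,A B_{\cdot i}$ of $(C^T \otimes A)\vect(B)$ since the $(j,i)$ block of $C^T \otimes A$ is $C_{ij}A$.

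I do not expect a genuine obstacle here; the statement is standard and the proof is essentially index bookkeeping. The only place that requires care is keeping the conventions consistent: the column-major ordering of Definition~\ref{def:vectorization} is what forces the transpose onto $C$ rather than $A$ and dictates that the outer Kronecker factor selects the block, and a transpose slip at that point is the one realistic way the argument could be derailed.
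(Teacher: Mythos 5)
Your argument is correct. Both routes you give work: the two elementary special cases $\vect(AX)=(I\otimes A)\vect(X)$ and $\vect(XC)=(C^T\otimes I)\vect(X)$ combined via the mixed-product property of Theorem~\ref{twr:kronecker_prop_mulitplication}, and the direct blockwise computation of the $j$-th column of $ABC$ as $\sum_{i=1}^{m} C_{ij}\,A B_{\cdot i}$; the dimension check and the column-major convention are handled consistently, which is indeed the only place a transpose could go wrong. Note that the paper itself states this result in its appendix of known facts and supplies no proof (it is the standard vectorization identity), so there is no in-paper argument to compare against; your derivation is the usual one and fills that gap correctly.
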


\begin{definition}
\label{def:white_noise}
    The matrix-valued time series $\{\mathbf{Z}_t; t \in \ZZ \}$ is called
    \textbf{matrix-valued white noise} if it is stationary with mean $0{m \times n}$ and has an autocovariance function given by
    \begin{equation*}
        \Gamma_h := \COV(\vect(\mathbf{Z}_{t+h}), \vect(\mathbf{Z}_t)) = 
        \begin{cases}
        \boldsymbol{\Sigma}, \text{ if } h = 0, \\
        0_{mn \times mn}, \text{ otherwise, }
        \end{cases}
    \end{equation*}
    where $\boldsymbol{\Sigma} \in \RR^{mn \times mn}$, and the vectorization operator $\vect(\cdot)$ is defined in Definition~\ref{def:vectorization}.
\end{definition}

\begin{definition}
\label{def:metrics}
    Let $\mathbf{X}_1, \ldots, \mathbf{X}_N$ be an $m \times n$ matrix-valued time series, and let $\hat{\mathbf{Y}}_1, \ldots, \hat{\mathbf{Y}}_N$ denote the corresponding predicted values from a given model. We define the \textbf{Mean Absolute Error (MAE)} as
    \begin{equation*}
        \text{MAE} := \frac{1}{N*m*n}\sum\limits_{t=1}^N \sum\limits_{i,j=1}^{m,n} |(\mathbf{X}_t)_{ij} - (\hat{\mathbf{Y}}_t)_{ij} |. 
    \end{equation*}
    The \textbf{Mean Squared Error} is defined as 
    \begin{equation*}
        \text{MSE} = \frac{1}{N*m*n}\sum\limits_{t=1}^N \sum\limits_{i,j=1}^{m,n} ((\mathbf{X}_t)_{ij} - (\hat{\mathbf{Y}}_t)_{ij} )^2.
    \end{equation*}
    The \textbf{root mean square error (RMSE)} value is defined as 
    \begin{equation*}
        \text{RMSE} := \sqrt{\text{MSE}}.
    \end{equation*}
    The \textbf{symmetric mean absolute percentage error (SMAPE)} is defined as 
    \begin{equation*}
        \text{SMAPE} := \frac{100}{N * m * n} \sum\limits_{t = 1}^N \sum\limits_{i,j=1}^{m,n} \frac{|(\mathbf{X}_t)_{ij} - (\hat{\mathbf{Y}}_t)_{ij}|}{(|(\mathbf{X}_t)_{ij}| + |(\hat{\mathbf{Y}}_t)_{ij}|)\frac{1}{2} }.
    \end{equation*}
\end{definition}

\clearpage

\section{Appendix - proofs of theorems}

\renewcommand{\thetheorem}{B\arabic{theorem}}
\renewcommand{\thedefinition}{B\arabic{definition}}

\setcounter{theorem}{0}

\begin{theorem}
\label{twr:cov_matrix_permutation}
    Consider the matrices $\mathbf{A}\in \RR^{m\times m}$ and $\mathbf{B}\in \RR^{n\times n}$. The Kronecker product matrix $\mathbf{A}\otimes \mathbf{B} = [ab^\otimes_{ij}]_{i,j = 0, \ldots, mn-1}$ can be expressed in terms of the elements of the matrix $\vect(\mathbf{A})\vect(\mathbf{B})^T = [ab^v_{ij}]_{i,j = 0, \ldots, mn-1}$. Each element of the Kronecker product matrix is given by
    $$ab^\otimes_{ij} = ab^v_{(i\%m) * n + (j\%n), (i//m) * m + j//n}.$$
\end{theorem}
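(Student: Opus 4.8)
The plan is to prove the identity entrywise: I would write the $(i,j)$ entry of $\mathbf{A}\otimes\mathbf{B}$ and of $\vect(\mathbf{A})\vect(\mathbf{B})^T$ each as a single product of one entry of $\mathbf{A}$ and one entry of $\mathbf{B}$, with the relevant row and column indices recovered from the linear indices by Euclidean division, and then match the two descriptions. First I would use the block structure of the Kronecker product: the entry $ab^\otimes_{ij}$ lies in the block of $\mathbf{A}\otimes\mathbf{B}$ selected by the quotients of $i$ and $j$ by the block size of $\mathbf{B}$, and in the within-block position given by the corresponding remainders, so $ab^\otimes_{ij}$ equals $a_{\alpha\beta}\,b_{\gamma\delta}$ with $\alpha,\beta,\gamma,\delta$ expressed through $//$ and $\%$ of $i$ and $j$. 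Since every $i,j\in\{0,\dots,mn-1\}$ decomposes uniquely this way, this step is a bijective relabelling of the entries.

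Next I would do the vectorization side. By Definition~\ref{def:vectorization}, $\vect$ stacks the columns in order, so the $I$-th coordinate of $\vect(\mathbf{A})$ is the entry of $\mathbf{A}$ in row $I\%m$ and column $I//m$, and likewise for $\vect(\mathbf{B})$; hence $ab^v_{IJ}$ is again a single product $a_{\cdot\cdot}\,b_{\cdot\cdot}$. Matching this with the expression from the first step yields four scalar identities equating the row of $\mathbf{A}$ with a remainder of $I$, the column of $\mathbf{A}$ with a quotient of $I$, and analogously for $\mathbf{B}$ and $J$. Inverting the maps $I\mapsto(I\%m,\,I//m)$ and $J\mapsto(J\%n,\,J//n)$ — bijections precisely because the indices are bounded by $mn-1$ — and solving for $I$ and $J$ in terms of $i$ and $j$ gives the closed-form permutation stated in the theorem.

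The main obstacle is bookkeeping rather than ideas: one must fix the conventions consistently — which tensor factor plays the role of the outer block in $\mathbf{A}\otimes\mathbf{B}$, and the column-major ordering in $\vect$ — and then verify that the claimed closed forms genuinely invert the index system, which requires simplifying nested expressions such as $((i\%m)\,n+(j\%n))\%m$ together with the matching quotient; these do not collapse by naive cancellation and must be handled using the range constraints $0\le i,j\le mn-1$. I would sanity-check the final formula on a small explicit case (for instance $m=2$, $n=3$) before trusting the general manipulation, since that is the fastest way to catch a transposed factor or a misplaced quotient/remainder. It is also worth recording that this result is the entrywise form of the classical rearrangement underlying the Nearest Kronecker Product reduction, in which $\mathbf{A}\otimes\mathbf{B}\mapsto\vect(\mathbf{A})\vect(\mathbf{B})^T$ is a fixed permutation of positions independent of $\mathbf{A}$ and $\mathbf{B}$; consequently it is enough to trace where a single generic entry $a_{\alpha\beta}\,b_{\gamma\delta}$ is sent, which is exactly what the first two steps accomplish.
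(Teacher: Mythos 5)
Your proposal is correct and follows essentially the same route as the paper's proof: both decompose the generic entry of each matrix as a single product $a_{\alpha\beta}b_{\gamma\delta}$ via quotient/remainder indexing (block structure on the Kronecker side, column-major stacking on the vectorization side), equate the resulting four index identities, and solve for the Kronecker-side indices in terms of $i$ and $j$. Your added sanity check and the remark about the Nearest Kronecker Product rearrangement are sensible but do not change the argument.
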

\begin{proof}
    Let $i$ and $j$ be an row and column index of matrix $\vect(\mathbf{A})\vect(\mathbf{B})^T$, and 
    $i^\otimes$ and $j^\otimes$ be an row and column index of matrix $\mathbf{A} \otimes \mathbf{B}$. Note that we numbering the matrix elements starting from zero. From the definition of the Kronecker product, i.e.,
    \begin{equation*}
        \mathbf{A}\otimes \mathbf{B} = \begin{bmatrix}
            a_{11} \mathbf{B} & \ldots & a_{1m} \mathbf{B} \\
            \vdots & \ddots & \vdots \\
            a_{m1}\mathbf{B} & \ldots & a_{mm}\mathbf{B} 
        \end{bmatrix}
    \end{equation*}
    we observe that
    $$(\mathbf{A}\otimes \mathbf{B})_{i^\otimes j^\otimes} = a_{i^\otimes //n, j^\otimes //m} b_{i^\otimes \%n, j^\otimes \%m},$$
    where $\%$ denotes the remainder, and $//$ truncating integer division \cite{kron_wolfram}.
    On the other hand, considering the product of two vectors, we obtain
    \begin{equation*}
        (\vect(\mathbf{A}) \vect(\mathbf{B})^T)_{ij} = a_{i\%m,i//m} b_{j\%n, j//n}.
    \end{equation*}
By comparing the indices of both matrices $(\mathbf{A}\otimes \mathbf{B})$ and $(\vect(\mathbf{A}) \vect(\mathbf{B})^T)$, we derive the following equations
\begin{align*}
    \begin{cases}
    i^\otimes//n &= i \% m \\
    j^\otimes //m &= i//m \\
    i^\otimes \% n &= j\% n \\
    j^\otimes \% m &= j //n. 
    \end{cases}
\end{align*}
We can solve the equation using the basic property of the modulus operator, i.e.,
$$i/n = i//n + i\%n,$$ 
by substituting the appropriate equivalences derived above.
Solving these equations yields
\begin{equation*}
    i^\otimes = (i\%m) * n + (j\%n), \quad j^\otimes = (i//m) * m + j//n. 
\end{equation*}
\end{proof}

\begin{theorem}
\label{twr:min_Ek}
Let $\mathbf{A} \in \RR^{m \times m}$, $\mathbf{B} \in \RR^{n \times n}$, and let $\mathbf{f}_t, \mathbf{b}_t \in \RR^{m \times n}$ for $t = 1, \ldots, N$. Assume that $||\mathbf{A}||_F = 1$. 
The solution to the minimization problem
    \begin{equation}
    \label{eq:burg_energy_func}
        \min_{\mathbf{A},\mathbf{B}} \sum_{t=k+1}^{N}\left( ||\mathbf{f}_t - \mathbf{A} \mathbf{b}_{t-1} \mathbf{B}^T||^2_F + ||\mathbf{b}_{t-1} - \mathbf{A} \mathbf{f}_t \mathbf{B}^T||^2_F \right)
    \end{equation}
is given by the following equations:
    \begin{align*}
            \mathbf{A} &=  
    \sum_{t=k+1}^{N} \left[
     \mathbf{f}_t\mathbf{B}\mathbf{b}_{t-1}^T +   \mathbf{b}_{t-1}\mathbf{B} \mathbf{f}_t^T\right]
     \left( \sum_{t=k+1}^{N} \left[ \left(\mathbf{B} \mathbf{b}_{t-1}^T\right)^2 +  \left(\mathbf{B}\mathbf{f}_t^T\right)^2 \right] \right)^{-1} \\
     \mathbf{B}^T &= \left( \sum_{t=k+1}^{N} \left[ \left(\mathbf{A}\mathbf{b}_{t-1}\right)^2 -  \left(\mathbf{A} \mathbf{f}_t\right)^2 \right] \right)^{-1}
    \sum_{t=k+1}^{N} \left[
     \mathbf{b}_{t-1}^T \mathbf{A}^T \mathbf{f}_t + \mathbf{f}_t^T \mathbf{A}^T \mathbf{b}_{t-1} \right].
    \end{align*}
\end{theorem}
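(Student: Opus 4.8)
The plan is to view the objective in \eqref{eq:burg_energy_func} as a function that, with either matrix held fixed, is a convex quadratic in the other, and to read off the two displayed formulas as the vanishing of the respective partial gradients. The normalization $\|\mathbf{A}\|_F=1$ is irrelevant to the stationarity equations themselves: since $(c\mathbf{A})\mathbf{X}(c^{-1}\mathbf{B})^T=\mathbf{A}\mathbf{X}\mathbf{B}^T$ for every $c\neq 0$, the objective depends on $(\mathbf{A},\mathbf{B})$ only through the product $\mathbf{A}\mathbf{X}\mathbf{B}^T$ (equivalently, through $\mathbf{B}\otimes\mathbf{A}$), so one minimises freely over both matrices and rescales $\mathbf{A}$ afterwards, absorbing the reciprocal into $\mathbf{B}$. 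Throughout I would use $\|\mathbf{M}\|_F^2=\Tr(\mathbf{M}^T\mathbf{M})$ to turn each summand into a trace polynomial; an equivalent route is to vectorise via Theorem~\ref{twr:three_matrix_multiplication}, writing $\vect(\mathbf{A}\mathbf{b}_{t-1}\mathbf{B}^T)=(\mathbf{B}\otimes\mathbf{A})\vect(\mathbf{b}_{t-1})$, which makes \eqref{eq:burg_energy_func} an ordinary linear least-squares problem whose normal equations one then reorganises into matrix form.

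For $\mathbf{A}$: fix $\mathbf{B}$, expand
$$\|\mathbf{f}_t-\mathbf{A}\mathbf{b}_{t-1}\mathbf{B}^T\|_F^2=\Tr(\mathbf{f}_t^T\mathbf{f}_t)-2\Tr(\mathbf{A}\mathbf{b}_{t-1}\mathbf{B}^T\mathbf{f}_t^T)+\Tr(\mathbf{A}\,\mathbf{b}_{t-1}\mathbf{B}^T\mathbf{B}\mathbf{b}_{t-1}^T\,\mathbf{A}^T)$$
and the analogue with $\mathbf{f}_t,\mathbf{b}_{t-1}$ exchanged, then differentiate in $\mathbf{A}$ using $\partial\Tr(\mathbf{A}\mathbf{M})/\partial\mathbf{A}=\mathbf{M}^T$ and $\partial\Tr(\mathbf{A}\mathbf{M}\mathbf{A}^T)/\partial\mathbf{A}=\mathbf{A}(\mathbf{M}+\mathbf{M}^T)$; since $\mathbf{b}_{t-1}\mathbf{B}^T\mathbf{B}\mathbf{b}_{t-1}^T$ and $\mathbf{f}_t\mathbf{B}^T\mathbf{B}\mathbf{f}_t^T$ are symmetric the factor of two comes out cleanly. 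Setting the summed gradient to zero gives $\mathbf{A}\sum_t[(\mathbf{B}\mathbf{b}_{t-1}^T)^2+(\mathbf{B}\mathbf{f}_t^T)^2]=\sum_t[\mathbf{f}_t\mathbf{B}\mathbf{b}_{t-1}^T+\mathbf{b}_{t-1}\mathbf{B}\mathbf{f}_t^T]$, where I have used $\mathbf{B}^T\mathbf{B}=\mathbf{B}^2$ to identify $\mathbf{b}_{t-1}\mathbf{B}^T\mathbf{B}\mathbf{b}_{t-1}^T=(\mathbf{B}\mathbf{b}_{t-1}^T)^2$ and likewise for $\mathbf{f}_t$; right-multiplying by the inverse of the left Gram sum yields \eqref{eq:burg_solve_A}.

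The equation for $\mathbf{B}$ is obtained by the mirror computation: fix $\mathbf{A}$, put $\mathbf{C}=\mathbf{B}^T$, expand
$$\|\mathbf{f}_t-\mathbf{A}\mathbf{b}_{t-1}\mathbf{C}\|_F^2=\Tr(\mathbf{f}_t^T\mathbf{f}_t)-2\Tr(\mathbf{b}_{t-1}^T\mathbf{A}^T\mathbf{f}_t\mathbf{C})+\Tr\!\bigl(\mathbf{C}^T(\mathbf{A}\mathbf{b}_{t-1})^2\mathbf{C}\bigr)$$
and its $\mathbf{f}_t\leftrightarrow\mathbf{b}_{t-1}$ counterpart, then differentiate in $\mathbf{C}$ (i.e.\ in $\vect(\mathbf{B}^T)$) using $\partial\Tr(\mathbf{M}\mathbf{C})/\partial\mathbf{C}=\mathbf{M}^T$ and $\partial\Tr(\mathbf{C}^T\mathbf{D}\mathbf{C})/\partial\mathbf{C}=(\mathbf{D}+\mathbf{D}^T)\mathbf{C}$ with the symmetric $\mathbf{D}=(\mathbf{A}\mathbf{b}_{t-1})^2$. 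The stationarity condition becomes $\sum_t[(\mathbf{A}\mathbf{b}_{t-1})^2+(\mathbf{A}\mathbf{f}_t)^2]\mathbf{C}=\sum_t[\mathbf{b}_{t-1}^T\mathbf{A}^T\mathbf{f}_t+\mathbf{f}_t^T\mathbf{A}^T\mathbf{b}_{t-1}]$ — an all-plus coefficient of $\mathbf{C}$, mirroring $(\mathbf{B}\mathbf{b}_{t-1}^T)^2+(\mathbf{B}\mathbf{f}_t^T)^2$ in the $\mathbf{A}$-equation — and left-multiplying by the inverse of the bracketed sum gives the closed form for $\mathbf{B}^T$ in \eqref{eq:burg_solve_B}.

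I expect the only real difficulty to be bookkeeping: tracking the transposes in the cross terms, committing to one denominator-layout convention for the matrix derivatives so the two equations emerge symmetrically, and using the symmetry of the quadratic-form matrices so that no stray factor of two is left over. Two points should be recorded for the statement to be literally correct. First, the Gram matrices $\sum_t[(\mathbf{B}\mathbf{b}_{t-1}^T)^2+(\mathbf{B}\mathbf{f}_t^T)^2]$ and $\sum_t[(\mathbf{A}\mathbf{b}_{t-1})^2+(\mathbf{A}\mathbf{f}_t)^2]$ are only positive semidefinite and may be singular; the ordinary inverse should then be read as the Moore--Penrose inverse, which selects the minimum-norm least-squares solution of the normal equation, consistent with the remark following \eqref{eq:burg_solve_B}. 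Second, the objective is biconvex but not jointly convex in $(\mathbf{A},\mathbf{B})$, so the two formulas describe the minimiser only block-coordinatewise; when the relevant Gram matrices are nonsingular each block subproblem is strictly convex, which identifies the updates with exact block minimisations and justifies the alternating scheme of Section~\ref{sec:Burg}.
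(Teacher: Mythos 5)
Your proposal follows essentially the same route as the paper's proof: hold one factor fixed, expand the squared Frobenius norms, set the block gradient to zero, and solve the resulting linear equation for $\mathbf{A}$ (resp.\ $\mathbf{B}^T$); your trace-calculus bookkeeping is just the un-vectorized form of the paper's Kronecker-product computation, and you even flag the vectorization route as the equivalent alternative. Two of your side remarks deserve emphasis because they are points where you are more careful than the source. First, your stationarity condition for $\mathbf{B}^T$ has the all-plus Gram matrix $\sum_t\left[(\mathbf{A}\mathbf{b}_{t-1})^2+(\mathbf{A}\mathbf{f}_t)^2\right]$; the paper's own derivation arrives at exactly this plus sign in its final display, so the minus sign appearing in the theorem statement and in \eqref{eq:burg_solve_B} is a typo rather than a disagreement with your computation — worth stating explicitly so a reader does not think you have made a sign error. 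Second, the paper closes by asserting that $E$ is globally convex because each summand equals $||\vect(\mathbf{f}_t)-(\mathbf{B}\otimes\mathbf{A})\vect(\mathbf{b}_{t-1})||^2$; that expression is convex in the product $\mathbf{B}\otimes\mathbf{A}$ but not jointly in $(\mathbf{A},\mathbf{B})$, since $(\mathbf{A},\mathbf{B})\mapsto\mathbf{B}\otimes\mathbf{A}$ is bilinear. Your characterization of the objective as biconvex, with the two displayed formulas being exact block minimizers (strictly so when the Gram matrices are nonsingular, and interpreted via the Moore--Penrose inverse otherwise), is the correct statement and is what actually justifies the alternating scheme of Section~\ref{sec:Burg}.
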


\begin{proof}
By utilizing the properties of the Frobenius norm, specifically 
$$||\mathbf{A}+\mathbf{B}||^2_F = ||\mathbf{A}||^2_F + ||\mathbf{B}||^2_F + 2 \langle \mathbf{A}, \mathbf{B} \rangle_F$$ 
and 
$$\langle \mathbf{A}, \mathbf{B} \rangle_F = \sum\limits_{i,j} a_{ij}b_{ij} = \vect(\mathbf{A})^T \vect(\mathbf{B}),$$ 
we have 
\begin{align*}
    ||\mathbf{f}_t - \mathbf{A} \mathbf{b}_{t-1} \mathbf{B}^T||^2_F
    = ||\mathbf{f}_t||_F^2 + ||\mathbf{A} \mathbf{b}_{t-1} \mathbf{B}^T||_F^2 - 2 \vect\left(\mathbf{f}_t\right)^T \vect\left(\mathbf{A} \mathbf{b}_{t-1} \mathbf{B}^T\right)
\end{align*}
and  
\begin{align*}
    ||\mathbf{b}_{t-1} - \mathbf{A} \mathbf{f}_t \mathbf{B}^T||^2_F = ||\mathbf{b}_{t-1}||_F^2 + ||\mathbf{A} \mathbf{f}_t \mathbf{B}^T||_F^2 - 2 \vect\left(\mathbf{b}_{t-1}\right)^T \vect\left( \mathbf{A} \mathbf{f}_t \mathbf{B}^T \right).
\end{align*}
To express these equations in a more manageable form, we introduce the following notations
\begin{align*}
    f^I &= ||\mathbf{f}_t||_F^2, \\
    f^{II} &= ||\mathbf{A} \mathbf{b}_{t-1} \mathbf{B}^T||_F^2, \\
    f^{III} &= - 2 \vect\left(\mathbf{f}_t\right)^T \vect\left(\mathbf{A} \mathbf{b}_{t-1} \mathbf{B}^T\right), \\
    b^{I} &= ||\mathbf{b}_{t-1}||_F^2, \\
    b^{II} &= ||\mathbf{A} \mathbf{f}_t \mathbf{B}^T||_F^2, \\
    b^{III} &= - 2 \vect\left(\mathbf{b}_{t-1}\right)^T \vect\left( \mathbf{A} \mathbf{f}_t \mathbf{B}^T \right).
\end{align*}
To find the minimum of the function
$$E(\mathbf{A}, \mathbf{B}) := \sum\limits_{t=k+1}^{N} \left(||\mathbf{f}_t - \mathbf{A} \mathbf{b}_{t-1} \mathbf{B}^T||^2_F + ||\mathbf{b}_{t-1} - \mathbf{A} \mathbf{f}_t \mathbf{B}^T||^2_F \right),$$
we compute the derivatives with respect to $\mathbf{A}$ and $\mathbf{B}$ (see \cite{matrix_derivatives}). To simplify the calculations, we express the derivatives in terms of the vectorized forms of $\mathbf{A}$ and $\mathbf{B}^T$, i.e., $\vect(\mathbf{A})$ and $\vect(\mathbf{B}^T)$. We obtain
\begin{align*}
    \frac{\partial f^{I}}{\partial \vect(\mathbf{A})} &= 0^{m^2 \times 1}, \\
    \frac{\partial f^{I}}{\partial \vect(\mathbf{B}^T)} &= 0^{n^2 \times 1}.
\end{align*}
Using basic properties of the vectorization operation, we derive
\begin{align*}
    f^{II} &= \vect\left( \mathbf{A} \mathbf{b}_{t-1} \mathbf{B}^T\right)^T \vect\left( \mathbf{A} \mathbf{b}_{t-1} \mathbf{B}^T\right) = \\
    &= \vect(\mathbf{A})^T \left(\mathbf{B} \mathbf{b}_{t-1}^T \otimes I_m \right)^T \left(\mathbf{B} \mathbf{b}_{t-1}^T \otimes I_m \right) \vect(\mathbf{A}) = \\
    &= \vect(\mathbf{B}^T)^T \left( I_n \otimes \mathbf{A} \mathbf{b}_{t-1} \right)^T \left(I_n \otimes \mathbf{A} \mathbf{b}_{t-1} \right) \vect(\mathbf{B}^T).
\end{align*}
Hence, we obtain the derivative
\begin{align*}
    \frac{\partial f^{II}}{\partial \vect(\mathbf{A})} &= 2 \left(\mathbf{B} \mathbf{b}_{t-1}^T \otimes I_m \right)^T \left(\mathbf{B} \mathbf{b}_{t-1}^T \otimes I_m \right) \vect(\mathbf{A}) = 2\vect(\mathbf{A} (\mathbf{B}\mathbf{b}_{t-1}^T)^2), \\
    \frac{\partial f^{II}}{\partial \vect(\mathbf{B}^T)} &= 2 \left( I_n \otimes \mathbf{A} \mathbf{b}_{t-1} \right)^T \left(I_n \otimes \mathbf{A} \mathbf{b}_{t-1} \right) \vect(\mathbf{B}^T) = 2 \vect((\mathbf{A}\mathbf{b}_{t-1})^2\mathbf{B}^T).
\end{align*}
Next, applying a similar approach to $f^{III}$, we obtain
\begin{align*}
    f^{III} &= -2 \vect(\mathbf{f}_t)^T (\mathbf{B}\mathbf{b}_{t-1}^T \otimes I_m) \vect(\mathbf{A}) = \\
    &= -2 \vect(\mathbf{f}_t)^T (I_n \otimes \mathbf{A}\mathbf{b}_{t-1}) \vect(\mathbf{B}^T),
\end{align*}
and 
\begin{align*}
    \frac{\partial f^{III}}{\partial \vect(\mathbf{A})} &= -2 \left[ \vect(\mathbf{f}_t)^T (\mathbf{B}\mathbf{b}_{t-1}^T \otimes I_m) \right]^T = -2 \vect(\mathbf{f}_t \mathbf{B} \mathbf{b}_{t-1}^T), \\
    \frac{\partial f^{III}}{\partial \vect(\mathbf{B}^T)} &= -2 \left[\vect(\mathbf{f}_t)^T (I_n \otimes \mathbf{A}\mathbf{b}_{t-1})\right]^T = -2 \vect(\mathbf{b}_{t-1}^T\mathbf{A}^T\mathbf{f}_t).
\end{align*}
A similar calculation can be carried out for $b^I$, $b^{II}$, and $b^{III}$. We obtain
\begin{align*}
    \frac{\partial b^{I}}{\partial \vect(\mathbf{A})} &= 0^{m^2 \times 1} \\
    \frac{\partial b^{I}}{\partial \vect(\mathbf{B}^T)} &= 0^{n^2 \times 1} \\
    \frac{\partial b^{II}}{\partial \vect(\mathbf{A})} &= 2 \vect\left(\mathbf{A} \left(\mathbf{B} \mathbf{f}_t^T\right)^2\right) \\ 
    \frac{\partial b^{II}}{\partial \vect(\mathbf{B}^T)} &= 2 \vect\left(\left(\mathbf{A} \mathbf{f}_t\right)^2 \mathbf{B}^T\right) \\ 
    \frac{\partial b^{III}}{\partial \vect(\mathbf{A})} &= - 2 \vect\left(\mathbf{b}_{t-1}\mathbf{B}\mathbf{f}_t^T\right) \\
    \frac{\partial b^{III}}{\partial \vect(\mathbf{B}^T)} &= 2 \vect\left(\mathbf{f}_t^T \mathbf{A}^T \mathbf{b}_{t-1}\right).
\end{align*}
By computing the derivatives for all $t = k+1, \ldots, N$, we obtain the following
\begin{equation}
    \frac{\partial E(\mathbf{A},\mathbf{B})}{\partial \vect(\mathbf{A})} = \sum_{t=k+1}^{N} \left[ 2\vect\left(\mathbf{A} \left(\mathbf{B}\mathbf{b}_{t-1}^T\right)^2\right) + 2\vect\left(\mathbf{A} \left(\mathbf{B} \mathbf{f}_t^T\right)^2\right) - 2\vect\left(\mathbf{f}_t \mathbf{B} \mathbf{b}_{t-1}^T\right) - 2\vect\left(\mathbf{b}_{t-1}\mathbf{B}\mathbf{f}_t^T\right) \right], 
\end{equation}
\begin{equation}
    \frac{\partial E(\mathbf{A},\mathbf{B})}{\partial \vect(\mathbf{B}^T)} = \sum_{t=k+1}^{N} \left[ 2 \vect\left(\left(\mathbf{A}\mathbf{b}_{t-1}\right)^2 \mathbf{B}^T\right) + 2 \vect\left(\left(\mathbf{A} \mathbf{f}_t\right)^2 \mathbf{B}^T\right) - 2 \vect\left(\mathbf{b}_{t-1}^T \mathbf{A}^T \mathbf{f}_t\right) - 2 \vect\left(\mathbf{f}_t^T \mathbf{A}^T \mathbf{b}_{t-1}\right) \right].
\end{equation}
Next, by setting the derivative equal to zero, we have
\begin{align*}
    \sum_{t=k+1}^{N} \left[ \vect\left(\mathbf{A} \left(\mathbf{B}\mathbf{b}_{t-1}^T\right)^2\right) + \vect\left(\mathbf{A} \left(\mathbf{B} \mathbf{f}_t^T\right)^2\right) - \vect\left(\mathbf{f}_t \mathbf{B} \mathbf{b}_{t-1}^T\right) - \vect\left(\mathbf{b}_{t-1}\mathbf{B}\mathbf{f}_t^T\right) \right] &= 0^{m^2 \times 1}, \\
    \sum_{t=k+1}^{N} \left[ \mathbf{A} \left(\mathbf{B}\mathbf{b}_{t-1}^T\right)^2 + \mathbf{A} \left(\mathbf{B} \mathbf{f}_t^T\right)^2 - \mathbf{f}_t \mathbf{B} \mathbf{b}_{t-1}^T - \mathbf{b}_{t-1}\mathbf{B}\mathbf{f}_t^T \right] &= 0^{m \times m}.
\end{align*}
Solving the resulting equation with respect to $\mathbf{A}$, we obtain the following
\begin{align*}
    \mathbf{A} \sum_{t=k+1}^{N} \left[ \left(\mathbf{B}\mathbf{b}_{t-1}^T\right)^2 + \left(\mathbf{B} \mathbf{f}_t^T\right)^2 \right] &= \sum_{t=k+1}^{N} \left[ \mathbf{f}_t \mathbf{B} \mathbf{b}_{t-1}^T + \mathbf{b}_{t-1}\mathbf{B}\mathbf{f}_t^T \right], \\ 
    \mathbf{A} &= \sum_{t=k+1}^{N} \left[ \mathbf{f}_t \mathbf{B} \mathbf{b}_{t-1}^T + \mathbf{b}_{t-1}\mathbf{B}\mathbf{f}_t^T \right] \left( \sum_{t=k+1}^{N} \left[ \left(\mathbf{B}\mathbf{b}_{t-1}^T\right)^2 + \left(\mathbf{B} \mathbf{f}_t^T\right)^2 \right] \right)^{-1}.
\end{align*}
Similarly, we obtain that
\begin{equation*}
    \mathbf{B}^T = \left( \sum_{t=k+1}^{N} \left[ \left(\mathbf{A}\mathbf{b}_{t-1}\right)^2  + \left(\mathbf{A} \mathbf{f}_t\right)^2 \right] \right)^{-1} \sum_{t=k+1}^{N}\left[ \mathbf{b}_{t-1}^T \mathbf{A}^T \mathbf{f}_t + \mathbf{f}_t^T \mathbf{A}^T \mathbf{b}_{t-1} \right].
\end{equation*}
We have found a stationary point of the function $E(\mathbf{A}, \mathbf{B})$. To show that this point corresponds to a global minimum, it is sufficient to demonstrate that the function $E$ is convex. Indeed, we have 
$$||\mathbf{f}_t - \mathbf{A} \mathbf{b}_{t-1} \mathbf{B}^T||^2_F = ||\vect(\mathbf{f}_t) - (\mathbf{B} \otimes \mathbf{A}) \vect(\mathbf{b}_{t-1})||^2_F.$$ 
Thus, each term in the sum in equation (\ref{eq:burg_energy_func}) is a convex quadratic function (see \cite{conv_opt}). Since the sum of convex functions is also convex, the function $E$ is convex, and the stationary point is therefore a global minimum.
\end{proof}

\bibliographystyle{plainnat}
\bibliography{bibliography}

\end{document}